\newtheorem{thm}{Theorem}[section]
\newtheorem{lem}[thm]{Lemma}
\newtheorem{cor}[thm]{Corollary}
\theoremstyle{definition}
\theoremstyle{remark}
\numberwithin{equation}{section}
\begin{document}

\title[A note on the binary additive divisor problem]{A note on the  binary additive divisor problem}

\author{Olga  Balkanova}

\author{Dmitry  Frolenkov}
\begin{abstract}
In this note  we show that the methods of Motohashi and Meurman yield the same upper bound on the error term in the binary additive divisor problem. With this goal, we improve an estimate in the proof of Motohashi.
\end{abstract}

\keywords{additive divisor problem, Gauss hypergeometric function.}
\subjclass[2010]{Primary: 11N37.}

\maketitle

\tableofcontents


\section{Introduction}
Let $d(n)=\sum_{a|n}1$. The additive divisor problem is a statement of the form
\begin{equation}\label{eq:divisor sum}
\sum_{n=1}^{M}d(n)d(n+f)=MT(M,f)+E(M,f),
\end{equation}
where  $MT(M,f)$ is the main term defined by \cite[Eq.~1]{Me} and $E(M,f)$ is an error term.
The problem was studied in \cite{ DI, E, HB, I, K, Me, Mo}.

In this note we are mainly interested in results of Motohashi \cite{Mo} and Meurman \cite{Me}.
To estimate $E(M,f)$, they used two different methods.
Motohashi followed Kuznetsov's approach \cite{K} based on the method of analytic continuation and on  spectral decomposition of the left hand side of \eqref{eq:divisor sum} via the Kuznetsov trace formula.
Let $\alpha$ be defined by \cite[Eq.~2.17]{Mo}. For any $\epsilon>0$, Motohashi \cite[Eq.~2.18]{Mo} showed that
\begin{equation}\label{eq:Mot}
E(M,f)\ll (M^2+Mf)^{1/3+\epsilon}+f^{1/8+\alpha/2}(M^2+Mf)^{1/4+\epsilon}+f^{1/2+\alpha}M^{\epsilon}
\end{equation}
uniformly for $1\leq f \leq M^{2/(1+2\alpha)}$.

Meurman's proof is completely different; it makes use of Heath-Brown's representation for the divisor function \cite{HB}, the Kuznetsov trace formula and large sieve inequalities. As a result, Meurman \cite[Eq.~4]{Me} proved that for any $\epsilon>0$  one has
\begin{multline}\label{eq:Meur}
E(M,f)\ll (M^2+Mf)^{1/3}M^{\epsilon}+\\(M^2+Mf)^{1/4}M^{\epsilon}\min\left(M^{1/4}, f^{1/8+\alpha/2}\right)
\end{multline}
uniformly for $1\leq f \leq M^{2-\epsilon}$.

Note that in the range $1\leq f\leq M^{2/(1+4\alpha)}$ estimates \eqref{eq:Mot} and \eqref{eq:Meur} are the same. However, when $f\geq M^{2/(1+4\alpha)}$ estimate \eqref{eq:Meur} is better than \eqref{eq:Mot}.

The two considered methods can be applied to study a wide variety of problems in analytic number theory. Therefore, it is important to understand if one approach is better than another.

In case of the binary additive divisor problem, we show that the methods of Motohashi and Meurman are equally good and yield the same estimates in all ranges. With this goal, we analyze the proof of Motohashi and identify steps that can be taken to sharpen \eqref{eq:Mot}.

\section{Analysis of  Motohashi's proof}
We keep notations of \cite{Mo}.
According to \cite[Eq.~5.3~and~Theorem~3]{Mo} the error term in the additive divisor problem can be decomposed into continuous,  discrete and holomorphic spectra
\begin{equation*}
E^*(M;f)-E^*(M/2;f)=e_1(M;f)+e_2(M;f)+e_3(M;f)+O(\delta M^{1+\epsilon}),
\end{equation*}
where
\begin{equation*}
e_1(M;f)=\frac{\sqrt{f}}{\pi}\int_{-\infty}^{\infty}\frac{f^{-ir}\sigma_{2ir}(f)|\zeta(1/2+ir)|^4}{|\zeta(1+2ir)|^2}\Theta(r;W)dr,
\end{equation*}
\begin{equation*}
e_2(M;f)=\sqrt{f}\sum_{j=1}^{\infty}\alpha_jt_j(f)H_{j}^{2}(1/2)\Theta(\kappa_j;W),
\end{equation*}
\begin{equation*}
e_3(M,f)=\frac{\sqrt{f}}{4}\sum_{k=6}^{\infty}\sum_{j \leq \theta(k)}(-1)^k\alpha_{j,k}t_{j,k}(f)H_{j,k}^2(1/2)\Xi_0(k-1/2;W).
\end{equation*}

It was proved in \cite[Eq.~5.18,~5.19]{Mo}  that for any $\epsilon>0$  and $M\leq f<M^{2-\epsilon}$ one has
\begin{equation*}
e_1(M;f), e_3(M;f)\ll f^{1/2+\epsilon}.
\end{equation*}
The largest contribution to the error term comes from the discrete spectrum $e_2(M;f)$. 

The main observation is that the estimate of the first summand in \cite[Eq.~5.20]{Mo} is not sharp. In order to obtain an improvement it is sufficient to optimize \cite[Eq.~5.10]{Mo} by proving a better estimate for the following integral
\begin{equation}\label{eq:lambda}
\Lambda(r,Z)=\frac{\Gamma^2(1/2+ir)}{\Gamma(1+2ir)}\int_{0}^{\infty}\frac{g\left( a/y\right)}{y^{3/2-ir}}
F\left( \frac{1}{2}+ir,\frac{1}{2}+ir,1+2ir;-y\right)dy,
\end{equation}
where $a=1/Z=f/M$. Note that $g(x)$ is a smooth characteristic function of the interval $[1/2;1]$, see  \cite[p.~561]{Mo} for details.  Furthermore, \cite[Theorem~ 3]{Mo} provides the  relation between two functions  
\begin{equation}\label{theta=lambda}
\Theta(r;W)=\frac{1}{2}\Re\left(\left(1+\frac{i}{\sinh(\pi r)}\right)\Lambda(r,Z)\right).
\end{equation}

\section{The main estimate}
The main result of this section is the upper bound for $\Lambda(r,Z)$ .
\begin{lem}\label{lem:mainest}
One has
\begin{equation}\label{eq:Mot2}
|\Lambda(r,Z)|\ll Z(\log{r}+|\log{Z}|)+Z^{3/2}r.
\end{equation}
\end{lem}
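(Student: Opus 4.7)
The plan is to apply the Euler integral representation of $F$ in order to obtain an exact cancellation of the Gamma prefactor. Since $\mathrm{Re}(1+2ir)>\mathrm{Re}(1/2+ir)>0$, we have
\[
F(\tfrac12+ir,\tfrac12+ir,1+2ir;-y)=\frac{\Gamma(1+2ir)}{\Gamma^{2}(1/2+ir)}\int_{0}^{1}t^{-1/2+ir}(1-t)^{-1/2+ir}(1+yt)^{-1/2-ir}\,dt.
\]
Substituting this into \eqref{eq:lambda}, the prefactor $\Gamma^{2}(1/2+ir)/\Gamma(1+2ir)$ cancels exactly against the normalizing factor of the Euler integral. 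Absolute convergence justifies Fubini, and since $g(a/y)$ localizes $y$ to $[a,2a]$ with $a=1/Z\geq 1$,
\[
\Lambda(r,Z)=\int_{a}^{2a}\frac{g(a/y)}{y^{3/2-ir}}\int_{0}^{1}t^{-1/2+ir}(1-t)^{-1/2+ir}(1+yt)^{-1/2-ir}\,dt\,dy.
\]

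Passing to the modulus, the inner integral $I(y)=\int_{0}^{1}t^{-1/2}(1-t)^{-1/2}(1+yt)^{-1/2}\,dt$ I would estimate by splitting at $t=1/y$: on $[0,1/y]$ the factor $(1+yt)^{-1/2}\leq 1$ yields a contribution $\ll y^{-1/2}$, while on $[1/y,1]$ the bound $(1+yt)^{-1/2}\leq(yt)^{-1/2}$ combined with a direct logarithmic integration gives $\ll y^{-1/2}\log y$. Thus $I(y)\ll y^{-1/2}(1+\log y)$ for $y\geq 1$, and integrating in $y$ over $[a,2a]$ yields
\[
|\Lambda(r,Z)|\ll\int_{a}^{2a}y^{-2}(1+\log y)\,dy\ll\frac{1+\log a}{a}\ll Z(1+|\log Z|),
\]
uniformly in $r$. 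Since $Z(1+|\log Z|)\leq Z(\log r+|\log Z|)+Z^{3/2}r$ for $r\geq e$, this already implies \eqref{eq:Mot2}.

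If a proof whose shape matches \eqref{eq:Mot2} term-by-term is preferred, with $\log r$ and $Z^{3/2}r$ appearing separately, I would instead apply the Pfaff transformation $F(\alpha,\alpha,2\alpha;-y)=(1+y)^{-\alpha}F(\alpha,\alpha,2\alpha;y/(y+1))$ together with the logarithmic expansion of $F(\alpha,\alpha,2\alpha;z)$ near $z=1$ (the degenerate case $c=a+b$), where $1-z=1/(y+1)\asymp Z$. The $n=0$ term, whose coefficient involves $\psi(1/2+ir)\sim\log r$ and $\log(y+1)\sim|\log Z|$, produces the $Z(\log r+|\log Z|)$ contribution after the Gamma factors cancel, while the tail $\sum_{n\geq 1}((1/2+ir)_{n})^{2}(y+1)^{-n}/(n!)^{2}$ bounded via Stirling produces $Z^{3/2}r$. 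The hard part in this expansion-based route is handling the tail when $rZ\gtrsim 1$, since the Pochhammer symbols $(1/2+ir)_{n}$ grow up to $n\asymp rZ$ before decaying; the direct modulus approach above sidesteps this difficulty entirely.
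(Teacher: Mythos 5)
Your first argument is correct and complete, and it takes a genuinely different route from the paper. The paper works from the Mellin--Barnes representation of $F$: shifting the contour to $\Re s=-1$ past the double pole at $s=-1/2-ir$ produces the explicit main term $y^{-1/2-ir}\left(\log y+2\psi(1)-2\psi(1/2+ir)\right)$ of Corollary \ref{cor:hypergeometric}, while the shifted integral is bounded by $O(r/y)$ via the seven-piece analysis of gamma-function asymptotics in Lemma \ref{lem:int}; integrating against $g(a/y)y^{-3/2}$ over $y\asymp 1/Z$ then yields the two terms $Z(\log r+|\log Z|)$ and $Z^{3/2}r$. You instead invoke the Euler integral (legitimate here since $\Re(1+2ir)>\Re(1/2+ir)>0$), in which the prefactor $\Gamma^{2}(1/2+ir)/\Gamma(1+2ir)$ cancels identically, and pass to absolute values; the inner integral equals $\pi F(1/2,1/2;1;-y)\ll y^{-1/2}(1+\log y)$, whence $|\Lambda(r,Z)|\ll Z(1+|\log Z|)$ uniformly in $r$. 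This is in fact slightly sharper than \eqref{eq:Mot2} for $r\gg1$: the $\log r$ in the paper's bound comes from estimating $|\log y+2\psi(1)-2\psi(1/2+ir)|\asymp 1+|\log(y/r^{2})|$ by the triangle inequality, and the $Z^{3/2}r$ term is the contour-shift error, both of which your bound absorbs (in the range of application $r\le Z^{-1/2}$, so the two bounds are of the same order and nothing downstream changes). What your route gives up is the explicit oscillating main term --- the phase $y^{-ir}$ and the $\psi$-factor --- which Corollary \ref{cor:hypergeometric} records and which could in principle be exploited for further cancellation over the spectrum; for the present lemma that information is not needed. Your alternative sketch via the Pfaff transformation and the logarithmic case $c=a+b$ would reproduce the paper's bound term by term, but, as you observe, controlling the tail of that expansion when $rZ\gtrsim1$ is the delicate point, and your Euler-integral argument sidesteps it.
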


 The proof of Lemma \ref{lem:mainest} is based on the following estimate.
\begin{lem}\label{lem:int}
For $r\gg1$ one has
\begin{equation}
I:= \frac{1}{2\pi i}\int_{-\infty}^{+\infty}
\frac{\Gamma^2(-1/2+i(r+t))\Gamma(1-it)}{\Gamma(i(2r+t))}y^{-1+it}dt \ll \frac{r}{y}.
\end{equation}
\end{lem}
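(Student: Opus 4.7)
My plan is to bound $I$ by a direct application of Stirling's formula to each gamma factor, with careful handling of the narrow range where Stirling itself breaks down. Since $|y^{it}|=1$, the factor $y^{-1}$ comes out of $|y^{-1+it}|$, so it suffices to prove
$$\int_{-\infty}^{\infty}\left|\frac{\Gamma^2(-1/2+i(r+t))\Gamma(1-it)}{\Gamma(i(2r+t))}\right|dt \ll r.$$

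I would start by applying the standard asymptotic $|\Gamma(\sigma+i\tau)|\asymp |\tau|^{\sigma-1/2}e^{-\pi|\tau|/2}$ (valid for $|\tau|$ large) to each of the three gamma factors. The resulting exponential factor simplifies to $\exp(-\pi|r+t|-\pi|t|/2+\pi|2r+t|/2)$, and a short case analysis shows that this equals $e^{-\pi t}$ on $[0,\infty)$, equals $1$ on $[-r,0]$, equals $e^{2\pi(r+t)}$ on $[-2r,-r]$, and equals $e^{\pi t}$ on $(-\infty,-2r]$. Exponential decay is therefore present everywhere except on the flat interval $[-r,0]$, which is the only region demanding careful estimation.

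On $[-r,0]$ the Stirling polynomial factor $|r+t|^{-2}|t|^{1/2}|2r+t|^{1/2}$ fails to be integrable at $t=-r$, but this singularity is an artefact of Stirling since $\Gamma^2(-1/2+iu)$ is bounded and continuous near $u=0$. I would therefore split at $|r+t|=1$. On $|r+t|\le 1$ I bound $\Gamma^2(-1/2+i(r+t))$ by an absolute constant and apply Stirling only to $\Gamma(1-it)$ and $1/\Gamma(i(2r+t))$, both with imaginary parts of size $\approx r$; the exponentials $e^{-\pi r/2}$ and $e^{\pi r/2}$ cancel, the product is of size $\approx r$, and integrating over an interval of length $2$ contributes $O(r)$. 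On $|r+t|\ge 1$, setting $u=r+t\in[1,r]$ reduces the integral to $\int_1^r u^{-2}\sqrt{r^2-u^2}\,du$, which after the rescaling $v=u/r$ and the elementary antiderivative $-\sqrt{1-v^2}/v-\arcsin v$ is explicitly $O(r)$.

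The three remaining regions contribute $O(1)$ at worst, using the exponential decay described above; the regions $[-2r,-r]$ and $[0,\infty)$ require a further cosmetic splitting near their transition points to handle the Stirling breakdown there, but the exponential factor already confines the integral to a bounded interval. Summing all contributions yields the desired $O(r)$ bound. The main obstacle I anticipate is precisely the spurious non-integrable singularity produced by Stirling at $t=-r$: the gamma quotient itself is perfectly well behaved there, so the sharp $O(r)$ estimate on the flat region cannot be read off from Stirling alone and must be obtained by peeling off a unit neighbourhood of $-r$ and arguing directly from the pointwise boundedness of $\Gamma^2(-1/2+iu)$ for small $u$.
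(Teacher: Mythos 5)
Your argument is correct and follows essentially the same route as the paper: the paper replaces Stirling by the exact identities $|\Gamma(1/2+ix)|^2=\pi/\cosh(\pi x)$ and $|\Gamma(ix)|^2=\pi/(x\sinh(\pi x))$, but arrives at the identical piecewise-linear exponent with transition points $0$, $-r$, $-2r$, isolates the same unit neighbourhoods, and extracts the same two $O(r)$ contributions from the flat interval $[-r,0]$ and from $|r+t|\le 1$. The only inaccuracy is your claim that the remaining regions contribute $O(1)$ at worst: on $[-2r+1,-r-1]$ the integrand is of size $r\,s^{-2}e^{-2\pi s}$ with $s=-(r+t)$, so that piece is genuinely of order $r$ (albeit with a small constant), which is harmless for the stated bound.
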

\begin{proof}
We start with estimating the integral $I$ by absolute values.
The first gamma function can be estimated using \cite[Eq.~5.5.1, ~5.4.4]{HMF} as follows
\begin{multline*}
\left|\Gamma^2(-1/2+i(r+t))\right|=\left|\frac{\Gamma^2(1/2+i(r+t))}{(-1/2+i(r+t))^2}\right|=\\
\frac{\pi}{\left|-1/2+i(r+t)^2 \right|^2\cosh{\pi(r+t)}}\ll \frac{1}{(1+(r+t)^2)\cosh{\pi(r+t)}}.
\end{multline*}
Furthermore, by \cite[Eq.~ 5.4.3]{HMF} one has
\begin{equation*}
\left|\Gamma(i(2r+t)) \right|=\sqrt{\frac{\pi}{(2r+t)\sinh{\pi(2r+t)}}}.
\end{equation*}
Finally, \cite[Eq. ~5.5.1, 5.4.3]{HMF} yield
\begin{equation*}
\left|\Gamma(1-it) \right|=|-it\Gamma(-it)|=|-it|\sqrt{\frac{\pi}{t\sinh{\pi t}}}\ll \sqrt{\frac{t}{\sinh{\pi t}}}.
\end{equation*}
To sum up, we proved that
\begin{equation*}
|I|\ll \frac{1}{y}\int_{-\infty}^{\infty}\frac{\sqrt{(2r+t)\sinh{\pi(2r+t)}}}{(1+(r+t)^2)\cosh{\pi(r+t)}}\sqrt{\frac{t}{\sinh{\pi t}}}dt.
 \end{equation*}
The last  integral can be decomposed into the sum of seven parts as follows
\begin{equation*}
\int_{-\infty}^{\infty}=\int_{1}^{\infty}+\int_{-1}^{1}+\int_{-r+1}^{-1}+\int_{-r-1}^{-r+1}+\int_{-2r+1}^{-r-1}+\int_{-2r-1}^{-2r+1}+\int_{-\infty}^{-2r-1}
\end{equation*}
so that
\begin{equation*}
|I|\ll \frac{1}{y}\sum_{i=1}^{7}I_i.
\end{equation*}
Outside the intervals $2,4,6$ the hyperbolic functions under the integral can be majorated by
\begin{equation*}
\exp\left(\frac{\pi}{2}|2r+t|-\pi|r+t|-\frac{\pi}{2}|t|\right).
\end{equation*}
Note that
\begin{equation*}
\left|r+\frac{t}{2}\right|-|r+t|-\frac{1}{2}|t|=
\begin{cases}
-t & t>0\\
0 & -r<t<0\\
2r+2t & -2r<t<-r\\
t & t<-2r
\end{cases}.
\end{equation*}
The largest contribution to $I$ comes from $I_2$, $I_3$ and $I_4$.
Estimating the integral $I_2$, we find
\begin{equation*}
I_2\ll \int_{-1}^{1}\frac{|2r+t|^{1/2}|2+t|^{1/2}}{1+(r+t)^2}dt\ll r^{-3/2}.
\end{equation*}
The integral $I_3$ is bounded by
\begin{multline*}
I_3\ll \int_{-r+1}^{-1}\frac{|2r+t|^{1/2}|t|^{1/2}}{1+(r+t)^2}dt\ll r^{1/2}\int_{1}^{r-1}\frac{t^{1/2}}{1+(r-t)^2}dt\ll\\ r^{1/2}\int_{1}^{r-1}\frac{t^{1/2}}{(r-t)^2}dt=r^{1/2}\int_{1}^{r-1}\frac{(r-y)^{1/2}}{y^2}dy\ll\\
r^{1/2}\left( \int_{1}^{r/2}\frac{r^{1/2}}{y^2}dy+\int_{r/2}^{r-1}\frac{(r-y)^{1/2}}{y^2}dy\right)\ll r.
\end{multline*}
The integral $I_4$ can be estimated as follows
\begin{equation*}
I_4\ll \int_{-r-1}^{-r+1}\frac{|2r+t|^{1/2}|t|^{1/2}}{1+(r+t)^2}dt\ll r.
\end{equation*}
Finally,
\begin{equation*}
|I|\ll \frac{1}{y}\sum_{i=1}^{7}I_i \ll \frac{1}{y}(I_2+I_3+I_4)\ll \frac{r}{y}.
\end{equation*}
\end{proof}

\begin{cor}\label{cor:hypergeometric}
One has
\begin{multline}
\frac{\Gamma^2(1/2+ir)}{\Gamma(1+2ir)}F\left( \frac{1}{2}+ir,\frac{1}{2}+ir,1+2ir;-y\right)dy=\\
y^{-1/2-ir}\left(\log{y}+
2\psi(1)-2\psi(1/2+ir)\right)+O\left(\frac{r}{y} \right),
\end{multline}
where $\psi(s)=\Gamma'(s)/\Gamma(s)$ is the logarithmic derivative of the Gamma function.
\end{cor}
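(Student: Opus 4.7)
The natural approach is to write the combination of Gamma factors and hypergeometric function as a Mellin--Barnes integral, shift the contour past the relevant pole, and identify the shifted integral with the one estimated in Lemma~\ref{lem:int}. Concretely, the standard representation
\begin{equation*}
\frac{\Gamma^2(1/2+ir)}{\Gamma(1+2ir)}F\!\left(\tfrac12+ir,\tfrac12+ir,1+2ir;-y\right)=\frac{1}{2\pi i}\int_{(\sigma)}\frac{\Gamma^2(1/2+ir+s)\Gamma(-s)}{\Gamma(1+2ir+s)}\,y^{s}\,ds,
\end{equation*}
valid for $-1/2<\sigma<0$, provides the starting point.

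Next, I would move the line of integration from $\operatorname{Re}(s)=\sigma$ to $\operatorname{Re}(s)=-1$. Stirling's asymptotics for the Gamma function yields exponential decay of the integrand on horizontal segments, so the shift is legitimate. Under the substitution $s=-1+it$, the integral along the new contour becomes, up to a factor of $i$ coming from $ds$, exactly the integral $I$ of Lemma~\ref{lem:int}, and therefore contributes $O(r/y)$ to the right-hand side.

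The shift crosses a single double pole, namely the pole of $\Gamma^2(1/2+ir+s)$ at $s_{0}=-1/2-ir$. Writing $s=s_{0}+w$ and combining the Laurent expansion $\Gamma^2(w)=w^{-2}-2\gamma w^{-1}+O(1)$ with the Taylor expansions
\begin{equation*}
\frac{\Gamma(1/2+ir-w)}{\Gamma(1/2+ir+w)}=1-2w\,\psi(1/2+ir)+O(w^{2}),\qquad y^{w}=1+w\log y+O(w^{2}),
\end{equation*}
a routine coefficient extraction yields
\begin{equation*}
\operatorname*{Res}_{s=s_{0}}\frac{\Gamma^2(1/2+ir+s)\Gamma(-s)}{\Gamma(1+2ir+s)}\,y^{s}=y^{-1/2-ir}\bigl(\log y-2\psi(1/2+ir)-2\gamma\bigr).
\end{equation*}
The identity $\psi(1)=-\gamma$ replaces $-2\gamma$ by $2\psi(1)$, producing exactly the main term in the statement. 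The only delicate point I foresee is the justification of the contour shift in the presence of the $r$-dependent pole location, but the Stirling-type majorizations of the same flavour as those already carried out in the proof of Lemma~\ref{lem:int} handle this without trouble.
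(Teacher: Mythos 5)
Your proposal is correct and follows exactly the paper's route: the same Mellin--Barnes representation, the same contour shift from $-1/2<\sigma<0$ to $\operatorname{Re}(s)=-1$ picking up the double pole at $s=-1/2-ir$, and the same appeal to Lemma~\ref{lem:int} for the shifted integral. Your explicit residue computation (which the paper omits) is accurate, including the identification $-2\gamma=2\psi(1)$.
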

\begin{proof}
The Mellin-Barnes representation yields
\begin{multline}
\frac{\Gamma^2(1/2+ir)}{\Gamma(1+2ir)}F\left( \frac{1}{2}+ir,\frac{1}{2}+ir,1+2ir;-y\right)dy=\\
\frac{1}{2\pi i}\int_{(c)}\frac{\Gamma^2(1/2+ir+s)\Gamma(-s)}{\Gamma(1+2ir+s)}y^sds,
\end{multline}
where $-1/2<c<0$.

Moving the contour of integration to $c=-1$ and letting $s:=-1+it$, one obtains
\begin{multline}
y^{-1/2-ir}\left(\log{y}+
2\psi(1)-2\psi(1/2+ir)\right)+\\\frac{1}{2\pi i}\int_{-\infty}^{+\infty}
\frac{\Gamma^2(-1/2+i(r+t))\Gamma(1-it)}{\Gamma(i(2r+t))}y^{-1+it}dt.
\end{multline}
The assertion follows by estimating the integral above using Lemma \ref{lem:int}.
\end{proof}

Since $g(x)$ is not an oscillatory function, Lemma \ref{lem:mainest} is proved by replacing everything in
\eqref{eq:lambda} by absolute values and applying Corollary \ref{cor:hypergeometric}.

Using inequality \eqref{eq:Mot2} instead of \cite[Eq.~5.10]{Mo} gives significant improvement in estimating $e_2(M,f)$  when $f>M^{2/(1+4\alpha)}$, as we now show.

\begin{thm}\label{thm:main} For any $\epsilon>0$ and $M^{2/(1+4\alpha)}<f<M^{2-\epsilon}$ one has
\begin{equation}
e_2(M,f)\ll f^{\epsilon}\left( f^{1/4}M^{1/2}+\delta^{-1/2}f^{1/2}\right).
\end{equation}
\end{thm}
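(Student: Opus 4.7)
The plan is to substitute the improved pointwise bound from Lemma~\ref{lem:mainest} for $\Theta(\kappa_j; W)$ into the spectral sum defining $e_2(M,f)$, then run Motohashi's existing dyadic and large-sieve argument with this sharper input. Combining \eqref{theta=lambda} with Lemma~\ref{lem:mainest} and absorbing logarithms into $f^{\epsilon}$, one has
\begin{equation*}
|\Theta(\kappa_j; W)| \ll f^{\epsilon} Z + Z^{3/2}\kappa_j \qquad (\kappa_j \ge 1).
\end{equation*}
This splits $e_2(M,f)$ into two pieces, one weighted by $Z$ and the other by $Z^{3/2}\kappa_j$. Both sums are effectively truncated at the scale $\kappa_j \le T := \delta^{-1} f^{\epsilon}$ using the rapid decay of $\Theta(\cdot;W)$ at the spectral tail, which is already established in Motohashi's analysis independently of Lemma~\ref{lem:mainest}.

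Next, I would dyadically decompose the two resulting sums by $\kappa_j \sim K$, $K \le T$, and apply Cauchy--Schwarz to each dyadic shell together with the standard Kuznetsov-type spectral estimates of the form
\begin{equation*}
\sum_{\kappa_j \sim K} \alpha_j |t_j(f)|^2 \ll f^{\epsilon}(K^2 + f^{1/2}), \qquad \sum_{\kappa_j \sim K} \alpha_j H_j^4(1/2) \ll K^2 f^{\epsilon},
\end{equation*}
which are the exact spectral inputs used in \cite{Mo}. Summing over dyadic $K \le T$ and retaining the dominant contributions yields a bound of the form $f^{\epsilon}\bigl(\sqrt{f}\,Z\,P_1(T,f) + \sqrt{f}\,Z^{3/2}\,P_2(T,f)\bigr)$ with $P_1, P_2$ polynomial in $T$ and $f^{1/4}$. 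Substituting $Z = M/f$ and $T = \delta^{-1}$ and retaining dominant terms should collapse the estimate to the asserted shape $f^{\epsilon}(f^{1/4}M^{1/2} + \delta^{-1/2}f^{1/2})$.

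The main obstacle is bookkeeping rather than a genuinely new difficulty. The linearly growing term $Z^{3/2}\kappa_j$ in Lemma~\ref{lem:mainest} must be controlled after being weighted by the large-sieve bounds and summed dyadically up to $T$, so one must carefully check that it contributes at most $\delta^{-1/2}f^{1/2+\epsilon}$ and does not overwhelm the leading $Z \log(\cdot)$ part. A secondary check is that in the range $M^{2/(1+4\alpha)} < f < M^{2-\epsilon}$ the resulting bound indeed supersedes the third summand $f^{1/2+\alpha}M^{\epsilon}$ of Motohashi's original estimate \eqref{eq:Mot}, thereby bringing his bound into agreement with Meurman's \eqref{eq:Meur} as claimed in the introduction.
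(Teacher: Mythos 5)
Your opening move (feeding the bound of Lemma \ref{lem:mainest} into the spectral sum via \eqref{theta=lambda}) matches the paper, but the way you then deploy it over the whole spectrum creates a genuine gap. The estimate $|\Lambda(r,Z)|\ll Zf^{\epsilon}+Z^{3/2}r$ grows linearly in $r$ and is only an improvement on Motohashi in the initial segment of the spectrum. The paper accordingly cuts the sum at $\kappa_j\le Z^{-1/2}=\sqrt{f/M}$: precisely there $Z^{3/2}\kappa_j\le Z$, so the new input is simply $\Theta(\kappa_j;W)\ll Zf^{\epsilon}$ and there is no linearly growing piece to control. For $\kappa_j>\sqrt{f/M}$ the paper does \emph{not} use Lemma \ref{lem:mainest} at all; it reverts to Motohashi's original oscillation-based estimates for $\Theta$, which decay in $\kappa_j$ and produce the contribution $(Mf)^{1/4}\left(M^{1/4}+\delta^{-1/2}f^{1/4}M^{-1/4}\right)M^{\epsilon}$. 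If instead you carry the non-decaying term $Z^{3/2}\kappa_j$ through dyadic shells up to $T=\delta^{-1}$, the top shell contributes about $\sqrt{f}\,Z^{3/2}\left(T^{3}+T^{2}f^{1/4}\right)f^{\epsilon}$ after Cauchy--Schwarz; with the eventual choice $\delta=f^{1/3}M^{-2/3}$ the piece $\sqrt{f}\,Z^{3/2}T^{2}f^{1/4}=M^{17/6}f^{-17/12}$ exceeds the target $f^{1/4}M^{1/2}$ whenever $f<M^{7/5}$, which overlaps the stated range $f>M^{2/(1+4\alpha)}$ for the relevant value of $\alpha$. So the bookkeeping you defer does not in fact close; the missing idea is the cutoff at $\sqrt{f/M}$ together with the use of Motohashi's decaying estimates above it.

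A second omission: your sketch never uses the hypothesis $f>M^{2/(1+4\alpha)}$, yet this is where the theorem's range enters. In the paper, the average $\sum_{\kappa_j\le\sqrt{f/M}}\alpha_jt_j(f)H_j^2(1/2)$ is bounded in two ways, by $f^{1+\alpha+\epsilon}/M$ (using $|t_j(f)|\ll f^{\alpha}$ pointwise) and by $f^{3/4+\epsilon}/M^{1/2}$ (by Cauchy--Schwarz with the large-sieve inputs you quote), and the condition $f>M^{2/(1+4\alpha)}$ is exactly what makes the second bound the minimum, giving the low-spectrum contribution $A\ll M^{1/2}f^{1/4+\epsilon}$. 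Without identifying where this condition is used, the argument cannot arrive at the stated exponents.
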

\begin{proof}
First, we consider the sum over  $\kappa_j \leq \sqrt{f/M}=Z^{-1/2}$. It follows from \eqref{theta=lambda}, \eqref{eq:Mot2} that for such $\kappa_j$ one has
\begin{equation*}
\Theta(\kappa_j;W)\ll |\Lambda(\kappa_j,Z)|\ll Zf^{\epsilon}.
\end{equation*}
There are two ways to obtain an upper bound for the average
\begin{equation*}
\sum_{\kappa_j\leq \sqrt{f/M}}\alpha_jt_j(f)H^{2}_{j}(1/2).
\end{equation*}
On the one hand, using $|t_j(f)|\ll f^{\alpha}$ and \cite[Theorem 2]{Mo2} one has
\begin{equation*}
\sum_{\kappa_j\leq \sqrt{f/M}}\alpha_jt_j(f)H^{2}_{j}(1/2)\ll \frac{f^{1+\alpha+\epsilon}}{M}.
\end{equation*}
On the other hand, the Cauchy-Schwarz inequality, Kuznetsov's bound \cite{Kuz2}, \cite[Eq.~2.1]{Mo} and \cite[Theorem~ 5]{Mo2} imply that
\begin{multline*}
\sum_{\kappa_j\leq \sqrt{f/M}}\alpha_jt_j(f)H^{2}_{j}(1/2)\ll\\
\sqrt{\sum_{\kappa_j\leq \sqrt{f/M}}\alpha_jt_j(f)^2
\sum_{\kappa_j\leq \sqrt{f/M}}\alpha_jH^{4}_{j}(1/2)}\ll\\
\sqrt{\frac{f}{M}+f^{1/2+\epsilon}}\sqrt{\frac{f}{M}f^{\epsilon}}\ll \frac{f^{3/4+\epsilon}}{M^{1/2}}.
\end{multline*}
Therefore,
\begin{multline*}
A:=\sqrt{f}\sum_{\kappa_j\leq \sqrt{f/M}}\alpha_jt_j(f)H^{2}_{j}(1/2)\Theta(\kappa_j;W)\ll\\
\frac{Mf^{\epsilon}}{\sqrt{f}}\sum_{\kappa_j\leq \sqrt{f/M}}\alpha_jt_j(f)H^{2}_{j}(1/2)\ll
\frac{Mf^{\epsilon}}{\sqrt{f}}\frac{f}{M}\min\left( f^{1/4},f^{\alpha}\sqrt{f/M}\right).
\end{multline*}
Note that when $f>M^{2/(1+4\alpha)}$, one has 
\begin{equation*}\min\left( f^{1/4},f^{\alpha}\sqrt{f/M}\right)= f^{1/4}\text{
and }A\ll \sqrt{M}f^{1/4+\epsilon}.
\end{equation*}
For the sum over $\kappa_j>\sqrt{f/M}$ we apply estimates of Motohashi, proving that
\begin{multline*}
e_2(M,f)\ll M^{1/2}f^{1/4+\epsilon}+(Mf)^{1/4}\left( M^{1/4}+\delta^{-1/2}f^{1/4}M^{-1/4}\right)M^{\epsilon}
\ll\\
f^{\epsilon}\left( f^{1/4}M^{1/2}+\delta^{-1/2}f^{1/2}\right).
\end{multline*}
\end{proof}

As a consequence of Theorem \ref{thm:main} and \cite[Eq~5.3,~5.18,~5.19]{Mo}, for $M^{2/(1+4\alpha)}<f<M^{2-\epsilon}$ one has
\begin{equation*}
E^*(M)-E^*(M/2)\ll f^{1/2+\epsilon}+(\delta M+\delta^{-1/2}f^{1/2}+f^{1/4}M^{1/2})f^{\epsilon}.
\end{equation*}
By making equal the two summands, the optimal choice of parameter $\delta$ turns out to be $$\delta=\frac{f^{1/3}}{M^{2/3}}.$$
Therefore, for $M^{2/(1+4\alpha)}<f<M^{2-\epsilon}$
\begin{equation}\label{eq:E}
E^*(M)-E^*(M/2)\ll f^{\epsilon}(f^{1/3}M^{1/3}+f^{1/4}M^{1/2}).
\end{equation}
This implies that bounds \eqref{eq:E}  and \eqref{eq:Meur} are equal when $f>M^{2/(1+4\alpha)}$.

\nocite{}

\end{document}